\documentclass[11pt, reqno]{amsart}
\pdfoutput=1
\usepackage[a4paper,scale=0.75,twoside=false]{geometry} 
\usepackage{amssymb}
\usepackage{amsthm}
\usepackage{amsmath}
\usepackage{tikz}
\usepackage{graphicx}
\usepackage{xcolor}
\usepackage{csquotes}
\usepackage{graphicx}
\usepackage{csquotes}
\usepackage{float}
\usepackage{soul}
\usepackage{enumitem}
\usepackage{amsaddr}
\usepackage{doi}

\definecolor{blue}{rgb}{0.0, 0.313, 0.608}
\usepackage{hyperref}
\hypersetup{
    colorlinks,
    breaklinks=true
    linkcolor={blue},
    citecolor={blue},
    urlcolor={orange}
}
\usepackage{cleveref}

\usepackage[style=ext-numeric-comp,
date=year,
giveninits=true,
articlein=false,
backend=biber,
sorting=nyt]{biblatex}

\AtEveryBibitem{\clearfield{issn}}
\AtEveryBibitem{\clearfield{isbn}}
\DeclareFieldFormat[article,inbook,incollection,inproceedings,patent,thesis,unpublished,misc]{title}{{#1\isdot}}
\DeclareFieldFormat[misc,inbook]{date}{(#1)\isdot}

\addbibresource{CH-KP-I periodically modulated waves.bib}

\setlength\bibitemsep{4pt}
\usepackage{csquotes}

\newtheorem{theorem}{Theorem}[section]
\newtheorem{lemma}[theorem]{Lemma}
\newtheorem{proposition}[theorem]{Proposition}
\newtheorem{corollary}[theorem]{Corollary}

\theoremstyle{definition}

\newtheorem{remark}[theorem]{Remark}

\newlist{hypothesis}{enumerate}{10}
\setlist[hypothesis]{label*=\emph{(\roman*)}}

\crefname{hypothesisi}{hypothesis}{hypotheses}
\Crefname{hypothesisi}{Hypothesis}{Hypotheses}

\numberwithin{equation}{section}

\newcommand{\norm}[1]{\left\lVert#1\right\rVert}
\newcommand{\R}{\mathbb{R}}

\newcommand{\hrefemail}[1]{\href{mailto:#1}{#1}}

\usepackage[skip=0pt plus1pt, indent=22pt]{parskip}

\title{Periodically modulated solitary waves of the CH--KP-I equation}

\subjclass[2020]{76B15, 76B45, 35Q35}
\keywords{Water waves, CH--KP-I equation, Dimension-breaking local bifurcation}
\thanks{$^\ddagger$Corresponding author}

\author{Dag Nilsson$^\dagger$}
\email{$^\dagger$\hrefemail{dag.nilsson@math.lu.se}}
\address{Centre for Mathematical Sciences, Lund University, Box 118, 221 00 Lund}

\author{Douglas Svensson Seth$^\ddagger$}
\email{$^\ddagger$\hrefemail{douglas.s.seth@ntnu.no}}
\address{Department of Mathematical Sciences, Norwegian University of Science and Technology, 7491 Trondheim, Norway}

\author{Yuexun Wang$^*$}
\email{$^*$\hrefemail{yuexunwang@lzu.edu.cn}}
\address{School of Mathematics and Statistics, Lanzhou University, 730000 Lanzhou City, P. R. China}
\begin{document}
\begin{abstract}
		We consider the CH--KP-I equation. For this equation we prove the existence of steady solutions, which are solitary in one horizontal direction and periodic in the other. We show that such waves bifurcate from the line solitary wave solutions, i.e. solitary wave solutions to the Camassa--Holm equation, in a dimension-breaking bifurcation. This is achieved through reformulating the problem as a dynamical system for a perturbation of the line solitary wave solutions, where the periodic direction takes the role of time, then applying the Lyapunov-Iooss theorem.
\end{abstract}

\maketitle
	
\renewcommand{\theequation}{\arabic{section}.\arabic{equation}}
\setcounter{equation}{0}
\section{Introduction}

The Kadomtsev--Petviashvili equation (KP equation)
\begin{equation}\label{eq:KP}
(u_t+uu_x+u_{xxx})_x+\sigma u_{yy}=0,
\end{equation}
where $\sigma=\pm1$, is a two-dimensional extension of the KdV equation
\begin{equation*}
u_t+uu_x+u_{xxx}=0.
\end{equation*}
The KP equation was first derived in \cite{KP1970} as a model for mainly unidirectional waves in a plasma. Since then \cref{eq:KP} has been found to model waves in other physical settings as well, in particular water waves \cite{AblowitzSegur1979}. The KP equation is called KP-I when $\sigma=-1$ and KP-II when $\sigma=1$. In the context of water waves, the KP-I appears as a model for strong surface tension, while $\sigma=1$ corresponds to the case of weak surface tension. Similarly to how the KP equation can be seen as a two-dimensional extension of the KdV equation, the Camassa--Holm--KP equation (CH--KP equation)
\begin{equation}\label{eq:CH--KP}
\left((1-\partial_x^2)u_t+3uu_x+2\kappa u_x-2u_xu_{xx}-uu_{xxx}\right)_x+\sigma u_{yy}=0,
\end{equation}
where $\kappa>0$, $\sigma=\pm1$, is a two-dimensional extension of the Camassa-Holm equation 
\begin{equation}\label{eq:CH}
(1-\partial_x^2)u_t+3uu_x+2\kappa u_x-2u_xu_{xx}-uu_{xxx}=0.
\end{equation}
Just as in the case of the KP equation, for $\sigma=-1$ \cref{eq:CH--KP} is called the CH--KP-I equation and for $\sigma=1$ it is called CH--KP-II and these correspond to weak and strong surface tension respectively. \Cref{eq:CH--KP} was first introduced in \cite{Chen2006} in the setting of nonlinear elasticity. More recently, the CH--KP-II equation was derived from the Euler equations in \cite{Gui_et_al2021}, in which they also proved local well-posedness in Sobolev spaces.

In this paper we are interested in travelling wave solutions of the CH--KP \cref{eq:CH--KP}, that is solutions of the form $u(t,x,y)=\phi(x-ct,y)$. A travelling wave solution which decays to zero as $|x-ct|+y\rightarrow \infty$ is called a solitary wave solution. In particular, for the Camassa--Holm equation $u(t,x)=Q(x-ct)$ is a solitary wave if $Q(x-ct)\rightarrow 0$ as $|x-ct|\rightarrow \infty$. We have the following result regarding existence of solitary waves for the Camassa--Holm equation.
\begin{theorem}[\cite{CamassaHolmHyman1994}]
There exists solitary waves $Q$ of \cref{eq:CH} only for $c>2\kappa$ and each such $c$ determines $Q$ uniquely up to translations.
\end{theorem}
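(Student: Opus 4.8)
The plan is to reduce the existence of solitary waves for the Camassa--Holm equation to an explicit ODE analysis. Substituting the travelling-wave ansatz $u(t,x)=Q(x-ct)$ into \cref{eq:CH} and writing $\xi=x-ct$, the time derivative becomes $-c\,\partial_\xi$, so the equation turns into the ordinary differential equation
\begin{equation*}
-c(Q'-Q''')+3QQ'+2\kappa Q'-2Q'Q''-QQ'''=0,
\end{equation*}
where primes denote $\xi$-derivatives. First I would observe that the entire left-hand side is a total derivative in $\xi$: indeed $3QQ'=\tfrac32(Q^2)'$, and the terms $-2Q'Q''-QQ'''$ combine with $cQ'''$ to an exact derivative. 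Integrating once and using that $Q$ and its derivatives vanish at infinity (so the integration constant is zero) should yield a second-order relation of the form
\begin{equation*}
(c-Q)Q''+\tfrac12(Q')^2+(c-2\kappa)Q-\tfrac32 Q^2=0.
\end{equation*}

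Next I would treat this as a first-order system in the phase plane $(Q,Q')$ and look for a homoclinic orbit to the origin, which corresponds precisely to a solitary wave decaying at $\pm\infty$. Multiplying by a suitable integrating factor, the second-order equation admits a conserved quantity (a Hamiltonian energy $E(Q,Q')$), and the homoclinic orbit lives on the level set $E=0$ passing through the fixed point at the origin. I would then solve the resulting first-order separable equation for $(Q')^2$ as a function of $Q$; the key point is that the factor $(c-Q)$ multiplying $Q''$ means the analysis naturally produces a peaked profile, and the condition for a genuine positive homoclinic loop to exist is exactly the linear stability/instability of the origin. Linearizing at the origin gives eigenvalues governed by the sign of $c-2\kappa$: the origin is a saddle (permitting a homoclinic connection) precisely when $c>2\kappa$, which is where the threshold $c>2\kappa$ enters.

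The main obstacle I anticipate is twofold. First, one must check that the homoclinic orbit closes up and that the resulting $Q$ is an admissible (single-valued, decaying) solution rather than running into the singular line $Q=c$ where the coefficient $(c-Q)$ of the highest derivative degenerates; controlling the solution near this degeneration, and verifying that the peak height stays below $c$, is the delicate analytic step. Second, to obtain the uniqueness up to translation I would argue that the level set $E=0$ contains a \emph{unique} nontrivial homoclinic orbit through the origin, and that the autonomy of the ODE in $\xi$ means any two solutions differ only by a shift $\xi\mapsto\xi+\xi_0$; translation invariance accounts for the one-parameter family, while the phase-plane picture forbids any second, geometrically distinct loop. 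I would close the argument by noting that for $c\le 2\kappa$ the origin is a center rather than a saddle, so no homoclinic orbit exists and hence no solitary wave can form, giving the sharp restriction $c>2\kappa$.
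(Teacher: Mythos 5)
A preliminary remark: the paper offers no proof of this theorem --- it is imported verbatim from the cited reference \cite{CamassaHolmHyman1994} --- so there is no in-paper argument to compare yours against, and I can only assess your sketch on its own terms. Your reduction to an autonomous ODE and the phase-plane strategy is the standard route and is essentially correct. Two corrections, one cosmetic and one substantive. The cosmetic one: integrating $\tfrac{\mathrm{d}}{\mathrm{d}\xi}\bigl[-cQ+cQ''+2\kappa Q+\tfrac32Q^2-\tfrac12(Q')^2-QQ''\bigr]=0$ with zero constant gives $(c-Q)Q''-\tfrac12(Q')^2+\tfrac32Q^2-(c-2\kappa)Q=0$; you have the opposite signs on the last three terms, and if taken literally your version would flip the saddle/center conclusion you draw later. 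Multiplying the correct equation by $Q'$ and integrating once more yields the explicit first integral $(Q')^2=Q^2(c-2\kappa-Q)/(c-Q)$, and this disposes, essentially for free, of the point you flag as the delicate analytic step: on the homoclinic loop one necessarily has $0<Q\le c-2\kappa$, hence $c-Q\ge 2\kappa>0$, so the orbit stays away from the singular line $Q=c$ and the profile is smooth (this is also the bound on $c-Q$ that the paper invokes later when analysing the operator $M$). Uniqueness up to translation then follows, as you say, from uniqueness of the zero-energy homoclinic loop plus autonomy in $\xi$.

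The genuine gap is in the nonexistence half. Your dichotomy ``saddle precisely when $c>2\kappa$'' is false: the linearization at the origin is $Q''=\frac{c-2\kappa}{c}Q$, whose coefficient is also positive when $c<0$, so the origin is a saddle there as well, and the center argument only excludes the range $0<c<2\kappa$ (with $c=0$ and $c=2\kappa$ degenerate). To rule out $c\le 0$ and the borderline cases you must return to the explicit formula: the only candidate turning point is $Q=c-2\kappa$, and for $c\le 2\kappa$ either this value is nonpositive and $Q^2(c-2\kappa-Q)/(c-Q)$ is negative on the interval between it and $0$, or, when $c<0$, it lies on the far side of the pole at $Q=c$; in every case the branch of the unstable manifold leaving the origin never turns around, so no homoclinic orbit and hence no solitary wave exists. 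With that case analysis added, your argument is a complete and standard proof of the quoted statement.
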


A solitary wave solution $Q$ of \cref{eq:CH} can be regarded as an $y$-independent solution of \cref{eq:CH--KP}. We refer to such solutions as line solitary wave solutions. In recent years there has been several studies into the stability of line solitary waves of the CH--KP equation. Indeed, in \cite{ChenJin2021} the nonlinear transverse instability of line solitary waves for the CH--KP-I equation was established and in \cite{Geyer_et_al2023} they proved linear stability of line solitary waves for the CH--KP-II equation.
Moreover, the spectral stability of one-dimensional small-amplitude periodic travelling waves with respect to two-dimensional perturbations was investigated in \cite{Chen_et_al_2024}, for a family of $b$-KP equations which includes the CH--KP equation.

\subsection*{Main result}
In the present paper we are interested in proving existence of periodically modulated solitary waves for the CH--KP-I equation. These are solutions with a solitary wave profile in the $x$-direction and periodic in the $y$-direction. We will prove that such waves bifurcate from the line solitary wave solutions in a dimension-breaking bifurcation, i.e, a bifurcation for which a spatially inhomogeneous solution emerges from a solution which is homogeneous in at least one spatial variable \cite{HaragusKirchgassner1996}. 

The CH--KP-I equation is given by
\begin{equation}\label{eq:CH--KP-I}
	\left((1-\partial_x^2)u_t+3uu_x+2\kappa u_x-2u_x u_{xx}-u u_{xxx}\right)_x-u_{yy}=0.
\end{equation}
We make the travelling wave ansatz $u(x,y,t)=v(x-ct,y)$. \Cref{eq:CH--KP-I} then becomes
\begin{equation*}
	\left(-cv_x+cv_{xxx}+3vv_x+2\kappa v_x-2v_xv_{xx}-vv_{xxx}\right)_x-v_{yy}=0,
\end{equation*}
and by using that $-2v_xv_{xx}-vv_{xxx}=-\tfrac{\mathrm{d}}{\mathrm{d}x}(\tfrac{v_x^2}{2}+vv_{xx})$, this equation can be written as 
\begin{equation}\label{eq:travelling CH}
	\left(-cv+cv_{xx}+2\kappa v+\frac{3}{2}v^2-\frac{1}{2}v_x^2-vv_{xx}\right)_{xx}-v_{yy}=0.
\end{equation}
For a fixed wavespeed $c>2\kappa$, let $Q$ be the unique solitary wave solution of the CH equation, so $Q$ satisfies the equation
\begin{equation*}
	-cQ+cQ''+2\kappa Q+\frac{3}{2}Q^2-\frac{1}{2}Q'^2-QQ''=0.
\end{equation*}
In \cref{eq:travelling CH} we make the ansatz $v=\phi+Q$, which gives us the following equation for $\phi$:
\begin{equation*}
	\left[\left(\partial_x(Q-c)\partial_x+Q''-3Q+c-2\kappa\right)\phi\right]_{xx}+\left(\frac{1}{2}\phi_x^2+\phi\phi_{xx}-\frac{3}{2}\phi^2\right)_{xx}+\phi_{yy}=0.
\end{equation*}
Let $\phi=\psi_x$ and integrate with respect to $x$. The resulting equation we get for $\psi$ can be written as 
\begin{equation}\label{psi_eq}
	-L\psi+N(\psi)+\psi_{yy}=0,
\end{equation}
where
\begin{align*}
	L&=-\partial_x M\partial_x,\\
	M&=\partial_x(Q-c)\partial_x+Q''-3Q+c-2\kappa,\\
	N(\psi)&=\left(\frac{1}{2}\psi_{xx}^2+\psi_x\psi_{xxx}-\frac{3}{2}\psi_x^2\right)_x.
\end{align*}
Our main result is
\begin{theorem}
	Let $c>2\kappa$ and let $Q$ be the unique solitary wave solution of \cref{eq:CH} with wave speed $c$. Then there exists an open neighborhood $I\subset \mathbb{R}$ of the origin and a continuously differentiable branch
	\begin{equation*}
		\{(\phi(s),\omega(s))\vert s\in I\}\subset C_b(\mathbb{R};H^4(\mathbb{R}))\times (0,\infty)
	\end{equation*} 
	of solutions of \cref{psi_eq}, which are odd in $x$ and $2\pi/\omega(s)$ periodic in $y$, with $\omega(0)=\sqrt{|\lambda|}$, where $\lambda<0$ is the single negative eigenvalue of the linear operator $L$.
\end{theorem}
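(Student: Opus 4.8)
The plan is to recast \cref{psi_eq} as a reversible evolution equation in the variable $y$, treating $y$ as time, and then to invoke the Lyapunov--Iooss center theorem to produce a one-parameter family of $y$-periodic orbits bifurcating from the equilibrium $\psi=0$, which corresponds to the line solitary wave. Writing $U=(\psi,\psi_y)$, \cref{psi_eq} becomes the first-order system
\begin{equation*}
	U_y=\mathcal{L}U+\mathcal{N}(U),\qquad \mathcal{L}=\begin{pmatrix}0 & 1\\ L & 0\end{pmatrix},\qquad \mathcal{N}(U)=\begin{pmatrix}0\\ -N(\psi)\end{pmatrix}.
\end{equation*}
Because \cref{psi_eq} contains no first-order $y$-derivative, this system is reversible with respect to $R(\psi,\psi_y)=(\psi,-\psi_y)$, and $U=0$ is an equilibrium. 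I would take the phase space to be a product of $x$-Sobolev spaces with enough regularity that $\phi=\psi_x\in H^4(\R)$, restricted to the subspace even in $x$; this symmetry is exactly what makes $\phi=\psi_x$ odd in $x$, as required, and the quadratic nonlinearity $N$ is smooth between such spaces, yielding the regularity $\phi\in C_b(\R;H^4(\R))$ in the statement.

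The second step is the spectral analysis of $\mathcal{L}$, from which the bifurcation data come. Inserting $U=e^{\nu y}(q,\nu q)$ shows that $\nu$ is an eigenvalue of $\mathcal{L}$ precisely when $\nu^{2}$ is an eigenvalue of $L$. Since $Q$ and its derivatives decay, the asymptotic operator is the constant-coefficient operator with Fourier symbol $c\xi^{4}+(c-2\kappa)\xi^{2}$, which is nonnegative precisely because $c>2\kappa$; hence the essential spectrum of $L$ is $[0,\infty)$ and that of $\mathcal{L}$ is the whole real axis, so the imaginary axis meets $\sigma(\mathcal{L})$ only in isolated points. The single negative eigenvalue $\lambda<0$ of $L$ then produces the pair $\pm\ii\sqrt{\abs{\lambda}}=\pm\ii\omega_0$ of purely imaginary eigenvalues, simple because $\lambda$ is and isolated from the (real) essential spectrum. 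The non-resonance condition $\ii k\omega_0\notin\sigma(\mathcal{L})$ for $\abs{k}\ge 2$ is automatic: it would force $-k^{2}\abs{\lambda}$ to be an eigenvalue of $L$, which is impossible, as $-k^{2}\abs{\lambda}<\lambda<0$ and $\lambda$ is the only negative eigenvalue. I would then construct the spectral projection onto $\pm\ii\omega_0$ by a resolvent contour integral, which is available precisely because these points lie off the essential spectrum.

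The main obstacle I anticipate is the eigenvalue $\nu=0$. Differentiating the profile equation for $Q$ gives $MQ'=0$, hence $LQ=-\partial_x MQ'=0$, so $0$ is an eigenvalue of $L$ with eigenfunction $Q$; this reflects the translation invariance $Q\mapsto Q(\cdot-x_0)$ and, since $Q$ is even, it persists in the even-in-$x$ subspace. For $\mathcal{L}$ it produces a two-dimensional Jordan block at $0$, sitting moreover at the edge of the essential spectrum. This is exactly the $k=0$ resonance not covered by the naive Lyapunov center theorem, so the argument must show it does not obstruct the bifurcation. The plan is to exploit the reversible structure together with the observation that the nonlinear family of translates $Q(\cdot-x_0)$ does not lie in the even-in-$x$ subspace, so the zero eigenvalue contributes only a cyclic/translational direction transverse to the periodic bifurcation; the genuine eigenvector $(Q,0)$ lies in $\mathrm{Fix}(R)$ while its generalized partner $(0,Q)$ is anti-reversible. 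I would therefore apply the version of the Lyapunov--Iooss theorem adapted to reversible systems carrying such a symmetry-induced zero eigenvalue, after verifying the resolvent bounds on $\mathcal{L}$ required in this ill-posed, elliptic setting.

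Finally, with these hypotheses in place the Lyapunov--Iooss theorem yields, after rescaling $y$ so that the period is normalized, a continuously differentiable branch of small $2\pi/\omega(s)$-periodic solutions with $\omega(0)=\omega_0=\sqrt{\abs{\lambda}}$, even in $x$. Undoing the substitutions $U=(\psi,\psi_y)$ and $\phi=\psi_x$ returns the branch $\{(\phi(s),\omega(s))\mid s\in I\}\subset C_b(\R;H^4(\R))\times(0,\infty)$ of solutions of \cref{psi_eq} with the stated properties. I expect the genuinely hard parts to be the resolvent estimates needed to place the problem within the Lyapunov--Iooss framework and the careful treatment of the translation-induced zero eigenvalue; the spectral count and the non-resonance condition, by contrast, follow fairly directly from the structure of $L$.
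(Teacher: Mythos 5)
Your overall strategy --- spatial dynamics in $y$ plus the Lyapunov--Iooss theorem, with the pair $\pm\mathrm{i}\sqrt{\abs{\lambda}}$ coming from the single negative eigenvalue of $L$ and non-resonance following because $-n^2\abs{\lambda}<\lambda$ cannot be an eigenvalue --- is the same as the paper's. But your choice of symmetry subspace contains a fatal error. The theorem concerns solutions $\psi$ of \cref{psi_eq} that are \emph{odd} in $x$ (the $H^4$ regularity in the statement identifies the branch with $\psi$ itself, not with $\psi_x$), and the paper accordingly works in $X=L^2_{\text{odd}}(\R)\times H^2_{\text{odd}}(\R)$, $Y=H^2_{\text{odd}}(\R)\times H^4_{\text{odd}}(\R)$. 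You instead restrict to $\psi$ \emph{even} in $x$. On that subspace the bifurcation mechanism disappears: since $\langle L\psi,\psi\rangle=\langle M\psi_x,\psi_x\rangle$, since the ground state $\psi_0$ of $M$ is even, and since $M\geq 0$ on $\{\psi_0\}^{\perp}$, the operator $M$ is nonnegative on odd functions; hence $L\geq 0$ on even $\psi$ (whose derivative is odd). The negative eigenvalue $\lambda$ of $L$ therefore has an \emph{odd} eigenfunction and is simply absent from your phase space, so $\mathcal{L}$ has no purely imaginary eigenvalues $\pm\mathrm{i}\omega_0$ there to bifurcate from.

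Even setting this aside, the zero eigenvalue you correctly single out as ``the main obstacle'' ($LQ=-\partial_xMQ'=0$ with $Q$ even) is flagged but not resolved: $0$ lies at the bottom of $\sigma_{\text{ess}}(L)=[0,\infty)$, so $L$ is not boundedly invertible on the complement of $Q$ in the even subspace, and \cref{hyp:4} of \Cref{thm:lyap} (unique, smooth solvability of $\mathcal{L}W=-\mathcal{N}(W^*)$) cannot be verified; invoking an unspecified variant of the theorem ``adapted to a symmetry-induced zero eigenvalue'' is not an argument. The paper's point is precisely that oddness removes this mode: if $\psi$ is odd and $L\psi=0$, then $M\psi_x$ is a constant lying in $H^1(\R)$, hence zero, so $\psi_x\in\ker M=\operatorname{span}\{Q'\}$ and $\psi$ is a multiple of the even function $Q$, forcing $\psi=0$; consequently $L^{-1}\colon L^2_{\text{odd}}(\R)\to H^4_{\text{odd}}(\R)$ is bounded and \cref{hyp:4} is immediate. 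Finally, the uniform resolvent bounds $\norm{(\mathcal{L}-\mathrm{i}n\omega_0)^{-1}}_{X\to X}=o(1)$ and $\norm{(\mathcal{L}-\mathrm{i}n\omega_0)^{-1}}_{X\to Y}=O(1)$, which you defer, form a substantive part of the proof (the paper derives them by interpolating elliptic estimates for $(L-n^2\lambda)^{-1}$); deferring them is tolerable in an outline, but the parity error must be corrected before anything else can go through.
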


In order to prove this theorem we will reformulate the CH--KP-I equation as a dynamical system
\begin{equation}\label{eq:dyn_sys}
	W_y=\mathcal{L}W+\mathcal{N}(W),
\end{equation}
where the periodic variable $y$ plays the role of time (see \cref{eq:lyap}). In the case $\mathcal{L}$ has nonresonant eigenvalues $\pm \mathrm{i}\omega_0$, it follows from the Lyapunov-centre theorem that there exists periodic solutions of \cref{eq:dyn_sys} with frequency close to $\omega_0$. In many situations however the operator $\mathcal{L}$ will have spectrum at $0$, which violates the nonresonance condition of the Lyapunov-Centre theorem. In these instances one can instead use the Lyapunov-Iooss theorem, which relaxes the condition that $\mathcal{L}$ is invertible to only requiring that $\mathcal{L}$ is invertible on the range of $\mathcal{N}$ \cite{Iooss1999}.  

We finish this section by presenting some previous results on periodically modulated solitary waves. Periodically modulated solitary waves have been found for the water wave problem, in \cite{Groves_et_al2002} they proved existence of such waves emerging from a KdV-type solitary wave, for weak surface tension and in \cite{Groves_et_al2016} they proved existence of waves emerging from an NLS-type solitary wave, for weak surface tension. These types of waves have also been found numerically in for infinite depth \cite{MilewskiWang2014} and also for the Davey-Stewartson equation \cite{Groves_et_al_DS2016}. Periodically modulated solitary waves have also been found for various other model equations. NLS-type periodically modulated waves have were found for the Davey--Stewartson equation. In \cite{TajiriMurakami1990} they found explicit periodically modulated solitary waves for the KP-I equation and a family of solutions were found in \cite{HaragusPego1999} for a generalized KP-I equation. More recently, periodically modulated solitary waves were found for the fractional KP equation in \cite{Borluk_et_al2022}.

\section{Spatial dynamics formulation}
We rewrite \cref{psi_eq} as a dynamical system with $y$ playing the role of time, by introducing the new variable $W=(W_1,W_2)=(\psi_y,\psi)$. The resulting dynamical system becomes
\begin{equation}\label{eq:dyn_system}
	W_y=\mathcal{L}W+\mathcal{N}(W),
\end{equation}
where 
\begin{equation*}
	\mathcal{L}=\left(\begin{array}{cc}
		0 & L\\
		1 & 0
		\end{array}\right),\ \mathcal{N}(W)=\left(\begin{array}{c}
		-N(W_2)\\
		0
	\end{array}\right).
\end{equation*}
Denote by \(H^k(\mathbb{R})\) \((k\in \mathbb{Z})\) the standard Sobolev spaces. We will use $||\cdot||_{L^2},\ ||\cdot||_{H^k}$ and $\langle \cdot,\cdot\rangle$ to denote the usual $L^2(\mathbb{R})$, $H^k(\mathbb{R})$ norms and $L^2(\mathbb{R})$ inner product respectively. If $U$ is a subspace of $L^2(\mathbb{R})$ we will use $U^\perp$ to denote the orthogonal complement of $U$ with respect to $\langle\cdot,\cdot\rangle$.
Let
\begin{align*}
	X&=L_{\text{odd}}^2(\mathbb{R})\times H_{\text{odd}}^2(\mathbb{R}),\\
	Y&=H_{\text{odd}}^2(\mathbb{R})\times H_{\text{odd}}^4(\mathbb{R}),
\end{align*}
where the subscript odd denotes
restriction to odd functions in the respective function spaces,
with norms
\begin{align*}
	||W||_X&=||W_1||_{L^2}+||W_2||_{H^2},\\
	||W||_Y&=||W_1||_{H^2}+||W_2||_{H^4}.
\end{align*}
Then $\mathcal{L}$ is a linear operator on $X$ with $D(\mathcal{L})=Y$ and $\mathcal{N}$ is a nonlinear operator from $Y$ to $X$. 

\section{Existence of periodic solutions}
We want to prove existence of periodic solutions of \cref{eq:dyn_system}. In order to do this we will use the Lyapunov-Iooss theorem \cite{Groves_Bagri2015}.
\begin{theorem}\label{thm:lyap}
	Consider the differential equation
	\begin{equation}\label{eq:lyap}
		\dot{W}=\mathcal{L}W+\mathcal{N}(W),
	\end{equation}
	in which $W$ belongs to a Banach space $X$ and suppose that
	\begin{hypothesis}
		\item $\mathcal{L}\colon D(\mathcal{L})\subset X\rightarrow X$ is a densely defined, closed linear operator. \label{hyp:1}
		\item There is an open neighborhood $U$ of the origin in $D(\mathcal{L})$ (regarded as a Banach space equipped with the graph norm) such that $\mathcal{N}\in C_{b,u}^3(U,X)$ and $\mathcal{N}(0)=0,\mathrm{d}{N}[0]=0$.   \label{hyp:2}
		\item \Cref{eq:lyap} is reversible: there exists a bounded operator $S$ on $X$ such that $S\mathcal{L}W=-\mathcal{L}SW$ and $S\mathcal{N}(W)=-\mathcal{N}(SW)$ for all $W\in U$.  \label{hyp:3}
		\item For each $W^*\in U$, the equation 
		\begin{equation}\label{eq:solvability}
			\mathcal{L}W=-\mathcal{N}(W^*),
		\end{equation}
		has a unique solution $W\in D(\mathcal{L})$ and the mapping $W^*\mapsto W$ belongs to $C_{b,u}^3(U,D(\mathcal{L}))$. \label{hyp:4}
	\end{hypothesis}
Suppose further that there exists $\omega_0\in\mathbb{R}$ such that
	\begin{hypothesis}[resume]
		\item $\pm\mathrm{i}\omega_0$ are nonzero simple eigenvalues of $\mathcal{L}$.  \label{hyp:5}
		\item $\mathrm{i}n\omega_0\in\rho(\mathcal{L})$ for $n\in \mathbb{Z}\backslash \{-1,0,1\}$. \label{hyp:6}
		\item $\norm{(\mathcal{L}-\mathrm{i}n\omega_0I)^{-1}}_{X\rightarrow X}=o(1)$ and $\norm{(\mathcal{L}-\mathrm{i}n\omega_0I)^{-1}}_{X\rightarrow D(\mathcal{L})}=O(1)$ as $|n|\rightarrow \infty$. \label{hyp:7}
	\end{hypothesis}
	Under these hypotheses there exists an open neighborhood $I$ of the origin in $\mathbb{R}$ and a continuously differentiable branch $\{(W(s),\omega(s))\}_{s\in I}$ in $C_b^1(\mathbb{R},X)\cap C_b(\mathbb{R},D(\mathcal{L}))$ of solutions to \cref{eq:lyap}, which are $\frac{2\pi}{\omega(s)}$-periodic, with $\omega(0)=\omega_0$.
\end{theorem}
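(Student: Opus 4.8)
The plan is to recast the search for $2\pi/\omega$-periodic solutions of \cref{eq:lyap} as a fixed-point problem on a space of $2\pi$-periodic functions and then perform a Lyapunov--Schmidt reduction. First I would rescale time by setting $t=\omega y$, so that \cref{eq:lyap} becomes $\omega\dot W=\mathcal{L}W+\mathcal{N}(W)$ and we seek genuinely $2\pi$-periodic $W$, with the frequency $\omega$ close to $\omega_0$ treated as an additional unknown. Working in the space $\mathbb{X}$ of $2\pi$-periodic functions $t\mapsto W(t)$ lying in $C^1_b(\mathbb{R}/2\pi\mathbb{Z},X)\cap C_b(\mathbb{R}/2\pi\mathbb{Z},D(\mathcal{L}))$, I would expand $W=\sum_{n\in\mathbb{Z}}\hat W_n e^{\ii n t}$, so that the linear part $A_\omega W:=\omega\dot W-\mathcal{L}W$ acts diagonally as $(\ii n\omega-\mathcal{L})\hat W_n$ on the $n$-th Fourier mode. \Cref{hyp:1} guarantees that $A_\omega$ is a well-defined closed operator on this periodic space.

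By \cref{hyp:5}, at $\omega=\omega_0$ the operator $A_{\omega_0}$ has a two-dimensional kernel spanned by $e^{\ii t}\zeta$ and $e^{-\ii t}\overline{\zeta}$, where $\zeta$ is the eigenvector of $\mathcal{L}$ for the eigenvalue $\ii\omega_0$. I would introduce the spectral projection $P$ onto this kernel, set $Q=I-P$, and split the equation accordingly. On the range of $Q$ the linear operator must be inverted mode by mode. For $|n|\ge 2$, \cref{hyp:6,hyp:7} guarantee that $\ii n\omega_0-\mathcal{L}$ is boundedly invertible with $\norm{(\ii n\omega_0-\mathcal{L})^{-1}}_{X\to X}=o(1)$ and $\norm{(\ii n\omega_0-\mathcal{L})^{-1}}_{X\to D(\mathcal{L})}=O(1)$ as $|n|\to\infty$; the first bound makes the resolvent series summable into a bounded operator on the periodic spaces and the second ensures the inverse lands in the correct regularity class. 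These bounds persist for $\omega$ near $\omega_0$ by a Neumann-series perturbation. For $n=\pm1$ one inverts on the complement of the kernel within those modes. The genuinely singular mode is $n=0$, where the equation reads $\mathcal{L}\hat W_0=-\mathcal{N}(W)_0$ and $\mathcal{L}$ itself need not be invertible; here \cref{hyp:4} is exactly the substitute for invertibility, since $\mathcal{N}(W)_0$ lies in the range of $\mathcal{N}$ and \cref{hyp:4} furnishes a unique solution operator of class $C^3$ on that range. This replacement of naive invertibility of $\mathcal{L}$ by solvability on the range of $\mathcal{N}$ is the characteristic feature distinguishing the Lyapunov--Iooss theorem from the classical Lyapunov centre theorem.

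With these inverses assembled, and using \cref{hyp:2} (so that $\mathcal{N}\in C^3_{b,u}$ with $\mathcal{N}(0)=0$ and $\mathrm{d}\mathcal{N}[0]=0$, hence $\mathcal{N}$ is quadratically small near the origin), the $Q$-projected equation can be solved by the implicit function theorem, yielding $QW=\Psi(PW,\omega)$ as a $C^3$ map of the kernel variable $PW$ and of $\omega$ in a neighborhood of the origin. Substituting back into the $P$-projected equation produces a finite-dimensional bifurcation equation on the two-dimensional kernel. Parametrizing $PW=s\bigl(e^{\ii t}\zeta+e^{-\ii t}\overline{\zeta}\bigr)$ and invoking reversibility \cref{hyp:3} through the operator $S$, I would restrict to the fixed-point subspace of the induced $S$-symmetry so that the phase degeneracy is removed and the bifurcation equation collapses to a single scalar relation between $s$ and $\omega$. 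Solving this scalar equation — whose leading order forces $\omega(0)=\omega_0$ — by the implicit function theorem yields the continuously differentiable branch $\{(W(s),\omega(s))\}_{s\in I}$, and unscaling via $t=\omega(s)y$ returns the asserted $2\pi/\omega(s)$-periodic solutions in $C^1_b(\mathbb{R},X)\cap C_b(\mathbb{R},D(\mathcal{L}))$.

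The principal obstacle is the construction and uniform estimation of the inverse of $A_\omega$ on the range of $Q$: one must fuse the mode-by-mode inverses into a single operator bounded between the periodic spaces, with bounds uniform in $\omega$ near $\omega_0$ and with sufficient smoothness that the reduced map $\Psi$ is genuinely $C^3$. This is where \cref{hyp:7} does the heavy lifting in controlling the high modes, while the treatment of the zero mode via \cref{hyp:4} is the delicate point that keeps the scheme closed despite $0$ possibly lying in the spectrum of $\mathcal{L}$. Reversibility is indispensable at the final stage, since it is precisely what permits $\omega$ to be adjusted so that the period closes and the branch is obtained.
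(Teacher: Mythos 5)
This theorem is not proved in the paper: it is the Lyapunov--Iooss theorem, quoted verbatim as a tool from \cite{Groves_Bagri2015} (with the underlying idea attributed to \cite{Iooss1999}), and the paper's work consists entirely of verifying its hypotheses for the CH--KP-I system. So there is no in-paper proof to compare against; what you have written is a sketch of the standard proof from the literature, and on the whole it identifies the right architecture (time rescaling, Fourier decomposition, Lyapunov--Schmidt reduction with the kernel at the modes $n=\pm1$, hypothesis \emph{(iv)} replacing invertibility at the zero mode, reversibility collapsing the two-dimensional bifurcation equation to a scalar one).

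Two steps in your sketch, however, would fail as written. First, the Neumann-series continuation of the resolvent bounds from $\omega_0$ to nearby $\omega$ does not work with the $X\rightarrow X$ estimate alone: the perturbation is $\ii n(\omega-\omega_0)$, whose norm grows like $|n|$, while \cref{hyp:7} only gives $\norm{(\mathcal{L}-\ii n\omega_0 I)^{-1}}_{X\rightarrow X}=o(1)$ --- in the present application the bound is merely $O(|n|^{-1/2})$, so $|n|\,|\omega-\omega_0|\cdot o(1)$ is unbounded in $n$ for any fixed $\omega\neq\omega_0$. One must instead factor $\ii n\omega-\mathcal{L}=\tfrac{\omega}{\omega_0}\bigl(\ii n\omega_0-\mathcal{L}+(1-\tfrac{\omega_0}{\omega})\mathcal{L}\bigr)$ (equivalently, treat the term $(\omega-\omega_0)\dot W$ by substituting $\dot W$ from the equation itself) and use the $X\rightarrow D(\mathcal{L})$ bound of \cref{hyp:7}, since $\norm{\mathcal{L}(\ii n\omega_0-\mathcal{L})^{-1}}_{X\rightarrow X}$ is then $O(1)$ uniformly in $n$ and the perturbation is genuinely $O(|\omega-\omega_0|)$. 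Second, the zero-mode equation is $\mathcal{L}\hat W_0=-\frac{1}{2\pi}\int_0^{2\pi}\mathcal{N}(W(t))\,\mathrm{d}t$, and the right-hand side is a \emph{time average} of values of $\mathcal{N}$, not $-\mathcal{N}(W^*)$ for a single $W^*$, so \cref{hyp:4} does not apply directly; one must note that \cref{hyp:4} with $W^*=0$ forces $\mathcal{L}$ to be injective, and then use linearity and closedness of $\mathcal{L}$ to average the solution operator $W^*\mapsto W$ over $t$. You correctly flag, but do not resolve, the remaining analytic issue of assembling the mode-by-mode inverses into a bounded operator on $C_b(\mathbb{R}/2\pi\mathbb{Z},X)\cap C_b^1$-type spaces, which is where the $o(1)$ decay (as opposed to mere boundedness) of the resolvent is actually consumed.
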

 The last three hypothesis require the most effort to verify and we will therefore initially turn our attention to them. The first four, on the other hand, can be verified in a very brief manner and we do so at the end of this section to complete the proof of the main theorem. In order to verify \cref{hyp:5,hyp:6,hyp:7} we gather some results on the spectra of $M$ and $L$.
\begin{lemma}
	The operator $M\colon H^2(\mathbb{R})\subset L^2(\mathbb{R})\rightarrow L^2(\mathbb{R})$ is self-adjoint and has a simple negative eigenvalue $\lambda_0$, with eigenfunction $\psi_0\in H^2(\mathbb{R})$, a simple eigenvalue $0$ with eigenfunction $Q'$ and the rest of the spectrum is positive and bounded away from $0$.
\end{lemma}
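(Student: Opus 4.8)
The plan is to recognise $M$ as a one-dimensional Sturm--Liouville operator and read off its spectrum from oscillation theory. First I would rewrite
\[
M = -\partial_x\, a\, \partial_x + V, \qquad a = c - Q, \quad V = Q'' - 3Q + c - 2\kappa,
\]
and record the qualitative properties of the Camassa--Holm solitary wave that I need: $Q$ is smooth, even, positive, strictly decreasing on $(0,\infty)$, decays together with its derivatives at infinity, and satisfies $0 < Q < c$. Consequently $a$ is smooth, bounded, and uniformly positive ($a \geq c - \max Q > 0$), while $V$ is bounded and real with $V \to c - 2\kappa$ at infinity. Self-adjointness on $H^2(\mathbb{R})$ then follows in the standard way: the symmetric nonnegative form $\int a\,|\phi'|^2$ is closed on $H^1(\mathbb{R})$ and its associated operator $-\partial_x a\partial_x$ is self-adjoint with domain $H^2(\mathbb{R})$ by uniform ellipticity, and adding the bounded multiplication operator $V$ preserves self-adjointness and the domain by the Kato--Rellich theorem.

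Next I would locate the essential spectrum. At infinity the coefficients converge, so the asymptotic operator is $M_\infty = -c\,\partial_x^2 + (c - 2\kappa)$, whose spectrum is $[c-2\kappa,\infty)$ by the Fourier transform and which is bounded away from $0$ since $c > 2\kappa$. Because $M - M_\infty$ has coefficients vanishing at infinity it is a relatively compact perturbation of $M_\infty$, so Weyl's theorem gives $\sigma_{\mathrm{ess}}(M) = [c-2\kappa,\infty)$. In particular the spectrum below $c - 2\kappa$ is purely discrete, consisting of isolated eigenvalues of finite multiplicity.

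To identify these eigenvalues I would use that the steady Camassa--Holm equation is translation invariant: differentiating it in $x$ yields $M Q' = 0$, so $0$ is an eigenvalue with eigenfunction $Q' \in H^2(\mathbb{R})$. Since $Q$ is even and strictly monotone on each half-line, $Q'$ is odd and vanishes only at the origin, i.e. it has exactly one zero. Now I would invoke Sturm--Liouville oscillation theory on the line: all eigenvalues below $\sigma_{\mathrm{ess}}(M)$ are simple (two $L^2$-eigenfunctions for the same eigenvalue have constant modified Wronskian $a(u_1 u_2' - u_1' u_2)$, which vanishes at infinity, hence are dependent), they may be ordered $\lambda_0 < \lambda_1 < \cdots$, and the eigenfunction of $\lambda_n$ has exactly $n$ zeros. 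As $Q'$ has exactly one zero it must be the eigenfunction of $\lambda_1$, whence $\lambda_1 = 0$; this in turn forces the existence of a simple ground state $\lambda_0 < 0$ with nodeless eigenfunction $\psi_0$. Every remaining eigenvalue satisfies $\lambda_n > \lambda_1 = 0$, and since $0$ is an isolated point of the spectrum, the part of $\sigma(M)$ other than $\lambda_0$ and $0$ lies in $(0,\infty)$ and is bounded away from $0$, together with $\sigma_{\mathrm{ess}}(M) = [c-2\kappa,\infty)$.

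The main obstacle I anticipate is the rigorous deployment of oscillation theory in the whole-line (singular) setting, in particular the implication that the existence of the one-zero eigenfunction $Q'$ forces a strictly lower ground state $\lambda_0$; establishing simplicity and the zero-counting correspondence carefully is the technical heart of the argument. A self-contained alternative route to the negative eigenvalue is to exhibit a test function $\phi$ concentrated near the crest, where $V < 0$, making the Rayleigh quotient $\langle M\phi,\phi\rangle/\norm{\phi}_{L^2}^2$ negative; uniqueness and simplicity then follow because $Q'$, having a single zero, is necessarily $\lambda_1$, and no eigenvalue can lie strictly between two consecutive ones. The subsidiary facts about $Q$ (evenness, monotonicity, and the bound $\max Q < c$) would be quoted from the existence theory of Camassa--Holm solitary waves.
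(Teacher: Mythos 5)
Your proposal is correct and follows essentially the same route as the paper: both identify the essential spectrum as $[c-2\kappa,\infty)$, use the translation-invariance identity $MQ'=0$ together with the fact that $Q'$ has exactly one zero, and invoke Sturm oscillation theory to conclude that $0$ is the second eigenvalue and hence that there is exactly one simple negative eigenvalue below it. The only technical differences are that the paper first performs a Liouville change of variables $z=\int_0^x(c-Q)^{-1/2}\,\mathrm{d}\theta$, $\Gamma=(c-Q)^{1/4}\psi$ (following Constantin--Strauss) to reduce $M$ to a Schr\"odinger operator $-\partial_{zz}+q(z)+c-2\kappa$ before applying oscillation theory, whereas you apply it directly in Sturm--Liouville form, and the paper adds a separate difference-quotient argument to show $\psi_0\in H^2$, which your identification of the operator domain as $H^2$ via uniform ellipticity renders unnecessary.
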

\begin{proof}
	Since $c-Q>2\kappa>0$, it follows from \cite[Chapter 5 Section 6]{Kato} that $M$ is self-adjoint. In order to determine the spectrum of $M$, we follow the argument in \cite{ConstantinStrauss2002}. Let 
	\begin{equation*}
		z=\int_0^x\frac{1}{\sqrt{c-Q}}\ \mathrm{d}\theta,\ \Gamma(z)=(c-Q)^\frac{1}{4}\psi(x).
	\end{equation*}
Then
\begin{align*}
	\psi_x&=\frac{\Gamma_z}{(c-Q)^\frac{3}{4}}+\frac{1}{4}\frac{\Gamma Q'}{(c-Q)^\frac{5}{4}},\\
	\psi_{xx}&=\frac{\Gamma_{zz}}{(c-Q)^\frac{5}{4}}+\frac{\Gamma_z Q'}{(c-Q)^\frac{7}{4}}+\frac{1}{4}\frac{\Gamma Q''}{(c-Q)^\frac{5}{4}}+\frac{5}{16}\frac{\Gamma (Q')^2}{(c-Q)^\frac{9}{4}},
\end{align*}
so
\begin{equation}\label{eq:connection_K_M}
	\begin{aligned}
	M\psi&=(Q-c)\psi_{xx}+Q'\psi_x+(Q''-3Q+c-2\kappa)\psi\\
	&=\frac{1}{(c-Q)^\frac{1}{4}}K\Gamma,
	\end{aligned}
\end{equation}
where 
\begin{equation*}
	K=-\partial_{zz}+q(z)+c-2\kappa,
\end{equation*}
and
\begin{equation*}
	q(z)=\frac{3}{4}Q''-3Q-\frac{1}{16}\frac{(Q')^2}{c-Q}.
\end{equation*}
Since $q$ decays exponentially it follows that $\sigma_{\text{ess}}(K)=[c-2\kappa,\infty)$ and there are at most finitely many eigenvalues located to the left of $c-2\kappa$. The nth eigenvalue, in increasing order, has up to a constant multiple, a unique eigenfunction with precisely $n-1$ zeroes. Due to \cref{eq:connection_K_M} the operator $M$ has the same properties. We note that $MQ'=0$, so $0$ is an eigenvalue with eigenfunction $Q'$ and $Q'$ has exactly one zero, which means that $0$ is the second eigenvalue of $M$. Therefore there is exactly one negative simple eigenvalue $\lambda_0$ of $M$ with eigenfunction $\psi_0\in H^2(\mathbb{R})$. Since $\lambda_0$ is simple and $M$ preserves parities of functions, $\psi_0$ is either even or odd. Due to $\psi_0$ not having any zeroes we conclude that $\psi_0$ must be even.

We next show $\psi_0\in H^2$. Since $\psi_0\in H^1$ is the eigenfunction corresponding to the eigenvalue $\lambda_0$ of the operator $M$,
then one has
\begin{equation}\label{eq:regularity}
\begin{aligned}
\int [(c-Q)\partial_x\psi_0\partial_x\phi+(Q''-3Q+c-2\kappa)\psi_0\phi]\,\mathrm{d} x=\int\lambda_0\psi_0\phi\,\mathrm{d} x
\end{aligned}
\end{equation}
for any $\phi\in H^1$. Set $D^hw(x)=\frac{w(x+h)-w(x)}{h}, \ h\neq 0$.
Taking $\phi=-D^{-h}D^h\psi_0$ and inserting it in \cref{eq:regularity} yields
\begin{equation}\label{eq:regularity-2}
\begin{aligned}
\int [D^h((c-Q)\partial_x\psi_0)D^h\partial_x\phi_0+D^h((Q''-3Q+c-2\kappa-\lambda_0)\psi_0)D^h\phi_0]\,\mathrm{d} x=0.
\end{aligned}
\end{equation}
It follows from \cref{eq:regularity-2} by standard calculations that
\begin{equation*}
\begin{aligned}
	\|D^h\partial_x\psi_0\|_{L^2}\lesssim \|\psi_0\|_{H^1},\quad \forall h\neq 0,
\end{aligned}
\end{equation*}
which implies $\psi_0\in H^2$. 
\end{proof}

We next turn to the operator $L$.
\begin{proposition}\label{prop:L_selfadjoint}
	The operator $L\colon H^4(\mathbb{R})\subset L^2(\mathbb{R})\rightarrow L^2(\mathbb{R})$ is self-adjoint.  
\end{proposition}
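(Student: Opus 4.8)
The plan is to decompose $L$ into a self-adjoint fourth-order principal part plus a lower-order symmetric perturbation, and then to invoke the Kato--Rellich theorem. Writing $M=-\partial_x(c-Q)\partial_x+V$ with $V:=Q''-3Q+c-2\kappa$, substituting into $L=-\partial_x M\partial_x$ and moving the outer derivatives through gives the factorization
\begin{equation*}
	L=L_0+L_1,\qquad L_0:=\partial_x^2\,(c-Q)\,\partial_x^2,\qquad L_1:=-\partial_x\,V\,\partial_x .
\end{equation*}
Here $Q$ is smooth, bounded and decaying with all derivatives bounded, the leading coefficient satisfies $c-Q\ge 2\kappa>0$ and is bounded above, and $V,V'$ are bounded; these facts will be used throughout.

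First I would show that $L_0$ is self-adjoint on $H^4(\mathbb{R})$. Rather than computing an adjoint directly, I would realize $L_0$ as the operator associated with the quadratic form
\begin{equation*}
	a(u,v):=\int_{\mathbb{R}}(c-Q)\,u''\,\overline{v''}\,\mathrm{d}x,\qquad u,v\in H^2(\mathbb{R}).
\end{equation*}
Since $2\kappa\le c-Q\le c$, the form $a$ is symmetric, nonnegative and closed, its form norm being equivalent to the $H^2$-norm (using $\norm{u'}_{L^2}^2\le\norm{u}_{L^2}\norm{u''}_{L^2}$). By Kato's first representation theorem $a$ determines a unique nonnegative self-adjoint operator $L_0$ with $\langle L_0u,v\rangle=a(u,v)$, and one checks that this $L_0$ coincides with $\partial_x^2(c-Q)\partial_x^2$ after two integrations by parts. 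The key point is to identify $D(L_0)$ with $H^4(\mathbb{R})$: if $u\in D(L_0)$ then $(c-Q)u''\in L^2$ has its distributional second derivative in $L^2$, hence $(c-Q)u''\in H^2$; since $1/(c-Q)$ is smooth with bounded derivatives, dividing yields $u''\in H^2$, i.e. $u\in H^4$, while the reverse inclusion is immediate. Thus $L_0$ is self-adjoint with $D(L_0)=H^4(\mathbb{R})$.

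It then remains to treat $L_1=-\partial_x V\partial_x$, which I claim is symmetric and $L_0$-bounded with relative bound zero. Symmetry follows from one integration by parts, using that $V$ is real: $\langle L_1u,v\rangle=\langle Vu',v'\rangle=\langle u,L_1v\rangle$. Since $L_1$ is second order with bounded coefficients, $\norm{L_1u}_{L^2}\lesssim\norm{u}_{H^2}$; combining the graph-norm estimate $\norm{u}_{H^4}\lesssim\norm{L_0u}_{L^2}+\norm{u}_{L^2}$ (equivalence of the graph norm of the self-adjoint elliptic operator $L_0$ with the $H^4$-norm, via the closed graph theorem) with the interpolation inequality $\norm{u}_{H^2}\le\delta\norm{u}_{H^4}+C_\delta\norm{u}_{L^2}$ gives, for every $\delta>0$,
\begin{equation*}
	\norm{L_1u}_{L^2}\le \delta'\norm{L_0u}_{L^2}+C_{\delta'}\norm{u}_{L^2}.
\end{equation*}
Hence the relative bound is zero, and the Kato--Rellich theorem yields that $L=L_0+L_1$ is self-adjoint on $D(L_0)=H^4(\mathbb{R})$. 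I expect the main obstacle to be the second step: the identification $D(L_0)=H^4(\mathbb{R})$ together with the attendant elliptic/graph-norm estimate. This is precisely where the positive lower bound $c-Q\ge 2\kappa>0$ and the smoothness of $Q$ are indispensable, since without coercivity of $a$ the natural domain could be strictly larger than $H^4$.
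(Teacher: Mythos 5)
Your proof is correct, but it follows a genuinely different route from the paper's. The paper argues directly on the adjoint: symmetry of $L$ is inherited from $M$, and then for $\tilde\psi\in\operatorname{dom}(L^*)$ one observes that the distribution $\partial_xM\partial_x\tilde\psi\in H^{-4}(\mathbb{R})$ acts as a bounded functional on $L^2(\mathbb{R})$, so by Riesz representation it lies in $L^2(\mathbb{R})$, whence fourth-order elliptic regularity gives $\tilde\psi\in H^4(\mathbb{R})$ and $\operatorname{dom}(L^*)=\operatorname{dom}(L)$. You instead split $L=\partial_x^2(c-Q)\partial_x^2-\partial_xV\partial_x$ (the same decomposition the paper uses later, in the spectral analysis of \Cref{prop:spec_L}), realize the coercive principal part via its quadratic form on $H^2(\mathbb{R})$, identify its operator domain as $H^4(\mathbb{R})$ by hand (peeling off the two outer derivatives and dividing by $c-Q$), and absorb the second-order remainder by Kato--Rellich with relative bound zero. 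The paper's argument is shorter but leans on elliptic regularity for the full variable-coefficient fourth-order operator; yours trades that for an explicit domain identification of the divergence-form principal part plus a standard perturbation step, and gives semiboundedness of $L_0$ as a by-product. Both hinge on the same essential facts, namely $c-Q\ge 2\kappa>0$ and the smoothness and decay of $Q$, and your graph-norm and interpolation estimates are the correct way to get the relative bound zero.
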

\begin{proof}
	Clearly $L$ is symmetric since $M$ is. Let $\tilde{\psi}\in \text{dom}(L^*)$ and let $\psi\in C_0^\infty(\mathbb{R})$. Then
	\begin{align*}
		\langle L^*\tilde{\psi},\psi\rangle&=\langle \tilde{\psi},L\psi\rangle
		=-\langle \tilde{\psi},\partial_x M\partial_x\psi\rangle
		=-\langle \partial_x M\partial_x\tilde{\psi},\psi\rangle,
	\end{align*}
	where we regard $\partial_x M\partial_x\tilde{\psi}$ as an element in $H^{-4}(\mathbb{R})$. Next note that
	\begin{align*}
		|\langle \partial_xM\partial_x\tilde{\psi},\psi\rangle|=|\langle L^*\tilde{\psi},\psi\rangle|
		\leq ||L^*\tilde{\psi}||_{L^2}||\psi||_{L^2},
	\end{align*}
	which by density implies that $\partial_xM\partial_x\tilde{\psi}$ defines a bounded linear functional on $L^2(\mathbb{R})$. By Riesz representation theorem this implies that $\partial_x M\partial_x\tilde{\psi}\in L^2(\mathbb{R})$, which in turn implies by ellipticity that $\tilde{\psi}\in H^4(\mathbb{R})$. Therefore $\text{dom}(L^*)=H^4(\mathbb{R})=\text{dom}(L)$, hence $L$ is self-adjoint. 
\end{proof}
\begin{proposition}\label{prop:spec_L}
	The operator $L$ has a single negative eigenvalue $\lambda$ and the rest of the spectrum is included in $[0,\infty)$.
\end{proposition}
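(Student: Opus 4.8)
The plan is to count the negative eigenvalues of $L$ by exploiting the factorisation $L=-\partial_x M\partial_x$ together with the spectral information on $M$ from the preceding lemma. The starting observation is the quadratic-form identity
\begin{equation*}
\langle L\psi,\psi\rangle=\langle M\psi_x,\psi_x\rangle,\qquad \psi\in H^4(\mathbb{R}),
\end{equation*}
obtained by integrating by parts once (legitimate since $\psi\in H^4(\mathbb{R})$ and all boundary terms vanish at infinity). Writing $n(A)$ for the dimension of the spectral subspace of a self-adjoint operator $A$ associated with $(-\infty,0)$, Glazman's lemma identifies $n(A)$ with the supremum of the dimensions of subspaces of $D(A)$ on which the form $\langle A\,\cdot\,,\cdot\rangle$ is negative definite. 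The lemma gives $n(M)=1$: one simple negative eigenvalue $\lambda_0$, a kernel spanned by $Q'$, and the remainder positive. I will show $n(L)=1$; since $n(L)$ is exactly the rank of the spectral projection of $L$ onto $(-\infty,0)$, this forces $\sigma(L)\cap(-\infty,0)$ to consist of a single simple eigenvalue $\lambda$ with the rest of the spectrum in $[0,\infty)$, which is the assertion.

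The upper bound $n(L)\le 1$ is the easy, inertia-type half. Suppose $V\subset H^4(\mathbb{R})$ is a two-dimensional subspace on which $\langle L\,\cdot\,,\cdot\rangle$ is negative definite. The map $\psi\mapsto\psi_x$ is injective on $H^4(\mathbb{R})$ (a constant in $L^2(\mathbb{R})$ vanishes), so $W:=\partial_x V\subset H^3(\mathbb{R})\subset D(M)$ is again two-dimensional, and by the form identity $\langle Mf,f\rangle<0$ for every nonzero $f\in W$. This contradicts $n(M)=1$, so $n(L)\le 1$.

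The lower bound $n(L)\ge 1$ is where the real work lies, and it is the step I expect to be the main obstacle. It is tempting to simply take $\psi_x=\psi_0$ in the form, but the negative eigenfunction $\psi_0$ of $M$ is even and has no zeroes, hence $\int_\mathbb{R}\psi_0\neq 0$, so it is not the derivative of any $L^2$ function. The remedy is to produce an admissible near-minimiser of $\langle M\,\cdot\,,\cdot\rangle$ that does lie in the range of $\partial_x$. Concretely, I would first take a smooth compactly supported truncation $\tilde\psi_0$ of $\psi_0$, close enough to $\psi_0$ in $H^1$ that $\langle M\tilde\psi_0,\tilde\psi_0\rangle<0$, and note $m:=\int_\mathbb{R}\tilde\psi_0\to\int_\mathbb{R}\psi_0\neq 0$, so that $m\neq 0$ once the truncation is wide. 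I then enforce a zero-mean constraint by subtracting a wide, flat bump $g_n(x)=\tfrac{m}{n}\chi(x/n)$ with $\chi\in C_0^\infty(\mathbb{R})$, $\int\chi=1$. Setting $f_n:=\tilde\psi_0-g_n$ gives $\int f_n=0$, whence $\psi^{(n)}:=\int_{-\infty}^x f_n\in C_0^\infty(\mathbb{R})\subset H^4(\mathbb{R})$. The crux is that the correction is energetically negligible: because $c-2\kappa>0$ and $Q,Q',Q''$ are bounded, direct scaling gives $\langle Mg_n,g_n\rangle=O(1/n)\to 0$, while $g_n\to 0$ uniformly on the fixed support of $M\tilde\psi_0$ forces the cross term $\langle M\tilde\psi_0,g_n\rangle\to 0$. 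Consequently
\begin{equation*}
\langle L\psi^{(n)},\psi^{(n)}\rangle=\langle Mf_n,f_n\rangle\longrightarrow\langle M\tilde\psi_0,\tilde\psi_0\rangle<0,
\end{equation*}
so $\langle L\psi^{(n)},\psi^{(n)}\rangle<0$ for all large $n$, giving $n(L)\ge 1$. Combining the two bounds yields $n(L)=1$.

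As an independent confirmation of the location of the remaining spectrum, I would note that $L$ is an exponentially decaying perturbation of the constant-coefficient operator $c\partial_x^4-(c-2\kappa)\partial_x^2$, whose Fourier symbol $c\xi^4+(c-2\kappa)\xi^2$ sweeps out $[0,\infty)$. Weyl's theorem on the stability of the essential spectrum under relatively compact perturbations then yields $\sigma_{\mathrm{ess}}(L)=[0,\infty)$, consistent with the conclusion already drawn from $n(L)=1$.
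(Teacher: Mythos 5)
Your proof is correct and follows essentially the same route as the paper: the identity $\langle L\psi,\psi\rangle=\langle M\psi_x,\psi_x\rangle$ together with the spectral data for $M$ yields at most one negative eigenvalue (the paper phrases this as nonnegativity of $L$ on the codimension-one subspace $\{\psi:\psi_x\in\{\psi_0\}^\perp\}$, you via Glazman's lemma) and at least one via test functions whose derivatives approximate $\psi_0$. In fact your explicit truncate-and-correct construction of $\psi^{(n)}$ carefully justifies a point the paper leaves implicit, namely that a sequence $\psi_n\in H^4(\mathbb{R})$ with $\langle M\partial_x\psi_n,\partial_x\psi_n\rangle\to\langle M\psi_0,\psi_0\rangle$ actually exists even though $\int_{\mathbb{R}}\psi_0\neq 0$ prevents taking $\partial_x\psi_n=\psi_0$ outright.
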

\begin{proof}
	The operator $L$ can be written as 
	\begin{equation*}
		L=\partial_x^2((c-Q)\partial_x^2)-(c-2\kappa)\partial_x^2-\partial_x((Q''-3Q)\partial_x),
	\end{equation*}
	and since $Q$ and its derivatives decay exponentially to $0$, the operator $\partial_x((Q''-3Q)\partial_x)$ is compact. Moreover, $\partial_x^2((c-Q)\partial_x^2)-(c-2\kappa)\partial_x^2$ is a positive operator, which implies that its spectrum is included in $[0,\infty)$. Therefore
	\begin{equation*}
		\sigma_{\text{ess}}(L)=\sigma_{\text{ess}}(\partial_x^2((c-Q)\partial_x^2)-(c-2\kappa)\partial_x^2)\subseteq[0,\infty).
	\end{equation*}
	We proceed to show that $L$ has a single negative eigenvalue.
	Let $\{\psi_n\}_{n\in \mathbb{N}}\subset H^4(\mathbb{R})$ be a sequence such that $\partial_x\psi_n\rightarrow \psi_0\in H^2(\mathbb{R})$. Then 
	\begin{equation}\label{eq:neg_seq}
		\begin{aligned}
		\langle L\psi_n,\psi_n\rangle &=-\langle\partial_x M\partial_x\psi_n,\psi_n\rangle 
		=\langle M\partial_x\psi_n,\partial_x\psi_n\rangle \\
		&\rightarrow_{n\rightarrow \infty} \langle M\psi_0,\psi_0\rangle
		=\lambda_0||\psi_0||^2<0.
		\end{aligned}
	\end{equation}
	The calculation in \cref{eq:neg_seq} implies that $\inf\sigma(L)<0$, so there exists at least one negative eigenvalue. Since
		\begin{equation}
		\langle M\psi,\psi\rangle \geq 0, \text{ for all }\psi\in \{\psi_0\}^\perp,
	\end{equation}
	we have that
	\begin{align*}
		\langle L\psi,\psi\rangle &=\langle M\partial_x\psi,\partial_x\psi\rangle \geq 0,\text{ for all }\psi\in H^4(\mathbb{R}) \text{ such that }\psi_x\in \{\psi_0\}^\perp.
	\end{align*}
	This means that $L$ is nonnegative on a co-dimension one subspace, and so coupled with the previous argument this shows that $L$ has precisely one negative eigenvalue.
\end{proof}
Restricting the operator to odd functions we obtain the following result.
\begin{proposition}\label{prop:odd_1}
	The operator $L\colon H_{\text{odd}}^4(\mathbb{R})\rightarrow L_{\text{odd}}^2(\mathbb{R})$ is self-adjoint and invertible, with bounded inverse
	\begin{equation*}
		L^{-1}\colon L_{\text{odd}}^2(\mathbb{R})\rightarrow H_{\text{odd}}^4(\mathbb{R}).
	\end{equation*}
\end{proposition}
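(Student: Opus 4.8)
The plan is to establish the three assertions—self-adjointness, injectivity, and boundedness of the inverse—in turn, using throughout that $L$ preserves parity, so that $L^2_{\text{odd}}(\mathbb{R})$ is a reducing subspace for the self-adjoint operator of \Cref{prop:L_selfadjoint}.

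First I would note that the involution $(P\psi)(x)=\psi(-x)$ satisfies $P\partial_x=-\partial_x P$ and, since $Q$ is even so that $M$ has even coefficients, $PM=MP$; hence $PL=P(-\partial_x M\partial_x)=LP$. Thus the orthogonal projection onto $L^2_{\text{odd}}(\mathbb{R})$ commutes with $L$, the odd subspace reduces $L$, and $L\colon H^4_{\text{odd}}(\mathbb{R})\to L^2_{\text{odd}}(\mathbb{R})$ is self-adjoint as the restriction of the self-adjoint operator of \Cref{prop:L_selfadjoint}. A self-adjoint operator possesses a bounded, everywhere-defined inverse exactly when $0$ lies in its resolvent set, so it remains to show $0\notin\sigma(L|_{\text{odd}})$.

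Injectivity is immediate: from $MQ'=0$ we get $LQ=-\partial_x MQ'=0$, while conversely $L\psi=0$ forces $M\psi_x$ to be constant, hence $0$ by decay, so $\psi_x\in\ker M=\operatorname{span}\{Q'\}$ and $\psi\in\operatorname{span}\{Q\}$ once decay is imposed. Therefore $\ker L=\operatorname{span}\{Q\}$, and as $Q$ is even this kernel intersects $L^2_{\text{odd}}(\mathbb{R})$ only in $\{0\}$.

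The substantive step, which I expect to be the main obstacle, is to upgrade triviality of the kernel to $L|_{\text{odd}}$ being bounded below, i.e.\ to exclude $0$ from the spectrum of the restriction. I would invert $L$ by hand. The equation $L\psi=f$ with $f\in L^2_{\text{odd}}(\mathbb{R})$ integrates once to $M\psi_x=-\partial_x^{-1}f$, the odd parity of $f$ being precisely what makes this primitive even and decaying at both ends (indeed $\int_0^\infty f=0$ for odd $f$). Since $\ker M=\operatorname{span}\{Q'\}$ is odd while $\psi_x$ and $\partial_x^{-1}f$ are even, the spectral description of $M$ gives $0\notin\sigma(M|_{\text{even}})$, so $M|_{\text{even}}$ is boundedly invertible and $\psi_x=-M^{-1}\partial_x^{-1}f$; a second integration then recovers $\psi$. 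The delicate point is the $L^2$-boundedness of the two antiderivative steps, equivalently the assertion that $0$ does not sit at the bottom of the essential spectrum after restriction: the operator $\partial_x^{-1}$ is unbounded at zero frequency, and one must exploit that oddness annihilates the zero mode—together with a Hardy-type weighted estimate—both to recover $L^2\to L^2$ control and to verify the solvability condition $\int_{\mathbb{R}}M^{-1}\partial_x^{-1}f=0$ that guarantees the final primitive is again odd and decaying. Securing this low-frequency estimate is where essentially all the effort goes.
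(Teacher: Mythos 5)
Your treatment of self-adjointness (parity commutes with $L$, so the odd subspace reduces the operator of \Cref{prop:L_selfadjoint}) and of injectivity ($L\psi=0\Rightarrow M\psi_x=\mathrm{const}=0\Rightarrow\psi_x\in\operatorname{span}\{Q'\}\Rightarrow\psi$ even, hence $\psi=0$ in the odd subspace) coincides with the paper's. The divergence is in the passage from injectivity to bounded invertibility. The paper does this abstractly: since $L$ is self-adjoint and injective, $\operatorname{ran}(L)^{\perp}=\ker L^{*}=\{0\}$, whence surjectivity, and the open mapping theorem then gives a bounded inverse; no explicit solution formula and no occurrence of $\partial_x^{-1}$ appear. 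You instead propose to build $L^{-1}$ by hand, integrating once to $M\psi_x=-\partial_x^{-1}f$, inverting $M$ on even functions (that part is fine, since $0$ is an eigenvalue of $M$ only on the odd subspace), and integrating again --- and you explicitly defer the decisive point, the $L^2$-boundedness of the antiderivative steps, describing it as where ``essentially all the effort goes.'' As submitted, the proposal therefore does not prove the statement: the one step separating ``injective with dense range'' from ``boundedly invertible'' is named but not carried out, so this is a genuine gap rather than a routine omission.

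It is worth being concrete about why that step cannot be dismissed as bookkeeping. Oddness of $f$ forces $\widehat f(0)=0$ but does not prevent $\widehat f$ from concentrating near $\xi=0$ (take $\widehat f(\xi)=\ii\,\mathrm{sgn}(\xi)\,|\xi|^{\alpha}$ near the origin with $0<\alpha\le\tfrac12$: then $f\in L^2_{\text{odd}}(\mathbb{R})$ but $\widehat f(\xi)/(\ii\xi)\notin L^2$), so $\partial_x^{-1}$ is not bounded from $L^2_{\text{odd}}(\mathbb{R})$ to $L^2(\mathbb{R})$. Relatedly, the constant-coefficient operator underlying $L$ at spatial infinity is $c\partial_x^4-(c-2\kappa)\partial_x^2$, whose symbol $c\xi^4+(c-2\kappa)\xi^2$ vanishes at $\xi=0$, and antisymmetrised Weyl sequences supported far from the origin remain odd; a low-frequency lower bound $\norm{L\psi}_{L^2}\gtrsim\norm{\psi}_{L^2}$ on $H^4_{\text{odd}}(\mathbb{R})$ is therefore not obtainable from parity alone. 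This is exactly the issue your plan must confront, and it is also the point at which the paper's own argument is most delicate, since $\operatorname{ran}(L)^{\perp}=\{0\}$ by itself yields only density of the range, and upgrading density to surjectivity requires the range to be closed --- i.e.\ the very estimate you were unable to supply. If you pursue the explicit route, the question you must settle first is whether $0$ is separated from the essential spectrum of $L$ on the odd subspace; your instinct that all the difficulty is concentrated there is correct, but the proposal as written does not resolve it.
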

\begin{proof}
	That $L$ is self-adjoint when restricting to odd functions is immediate from \Cref{prop:L_selfadjoint}.
	
	Let $\psi\in\ker{L}$, then $\partial_x M\partial_x\psi=0$ which implies that $M\partial_x\psi=d$ for some constant $d$. However, $M\partial_x\psi\in H_{\text{even}}^1 (\R)$ which implies that $d=0$. Therefore $\partial_x\psi\in \ker{M}$, and $\ker{M}=\text{span}\{Q'\}$ so $\partial_x\psi$ must be a multiple of $Q'$. Since $Q$ is even this means that $\psi$ is even as well, which implies that $\psi=0$, so $L$ is injective. Using that $L$ is injective and self-adjoint we find that
	\begin{equation*}
		\{0\}=\ker{L}=\ker{L^*}=\text{ran}(L)^\perp,
	\end{equation*}
	which implies that $L$ is surjective. Since $L$ is bijective it follows from the open mapping theorem that $L$ is invertible, with bounded inverse
	\begin{equation*}
		L^{-1}\colon L_{\text{odd}}^2(\mathbb{R})\rightarrow H_{\text{odd}}^4(\mathbb{R}).
	\end{equation*}
\end{proof}
By combining \Cref{prop:spec_L,prop:odd_1} we immediately get the following result.
\begin{corollary}\label{prop:spec_L_odd}
	The operator $L\colon H_{\text{odd}}^4(\mathbb{R})\rightarrow L_{\text{odd}}^2(\mathbb{R})$ has precisely one eigenvalue, which is negative and the rest of the spectrum is included in $(0,\infty)$.
\end{corollary}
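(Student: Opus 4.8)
The plan is to read off the statement by combining the spectral picture for $L$ on the full space (\Cref{prop:spec_L}) with the invertibility of its odd restriction (\Cref{prop:odd_1}); the only point requiring genuine work is to verify that the single negative eigenvalue is attained on odd functions.

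First I would note that $L=-\partial_x M\partial_x$ commutes with the reflection $\psi(x)\mapsto\psi(-x)$, since the coefficients of $M$ are even ($Q$ is even) and $L$ contains two $x$-derivatives. Hence the orthogonal splitting $L^2(\mathbb{R})=L^2_{\mathrm{even}}(\mathbb{R})\oplus L^2_{\mathrm{odd}}(\mathbb{R})$ reduces the self-adjoint operator $L$, so that $\sigma(L|_{\mathrm{odd}})\subseteq\sigma(L)$. By \Cref{prop:spec_L} this already gives $\sigma(L|_{\mathrm{odd}})\subseteq\{\lambda\}\cup[0,\infty)$, with $\lambda<0$ the unique negative eigenvalue of $L$.

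The crux is to show that the corresponding eigenfunction $\Psi$ is odd, so that $\lambda\in\sigma(L|_{\mathrm{odd}})$. Because $\lambda$ is simple and $L$ preserves parity, $\Psi$ has a definite parity. Suppose it were even; then $\partial_x\Psi$ is odd. The spectral description of $M$ shows that its unique negative eigenvalue $\lambda_0$ has the even eigenfunction $\psi_0$, while the kernel is spanned by the odd function $Q'$, so $M\ge 0$ on odd functions. Using the identity $\langle L\Psi,\Psi\rangle=\langle M\partial_x\Psi,\partial_x\Psi\rangle$ from \cref{eq:neg_seq}, evenness of $\Psi$ would force $\lambda\norm{\Psi}_{L^2}^2=\langle M\partial_x\Psi,\partial_x\Psi\rangle\ge 0$, contradicting $\lambda<0$. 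Therefore $\Psi$ is odd and $\lambda$ lies in the spectrum of the odd restriction; this parity argument is the main obstacle, everything else being formal.

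Finally I would invoke \Cref{prop:odd_1}: invertibility of $L|_{\mathrm{odd}}$ means $0\notin\sigma(L|_{\mathrm{odd}})$, so the inclusion sharpens to $\sigma(L|_{\mathrm{odd}})\subseteq\{\lambda\}\cup(0,\infty)$. Since $\lambda$ is simple for $L$, it is the unique negative eigenvalue of the restriction and its essential spectrum lies in $(0,\infty)$, which is precisely the assertion and justifies the author's claim that the result follows immediately.
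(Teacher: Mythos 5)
Your proof is correct and follows the paper's intended route: the paper offers no argument beyond citing the combination of \Cref{prop:spec_L} and \Cref{prop:odd_1}. The parity argument you supply — that the eigenfunction $\Psi$ of the negative eigenvalue must be odd, since an even $\Psi$ would give $\lambda\norm{\Psi}_{L^2}^2=\langle M\partial_x\Psi,\partial_x\Psi\rangle\ge 0$ because the odd function $\partial_x\Psi$ is orthogonal to the even $\psi_0$ — is precisely the step the paper's ``immediately'' glosses over, and it is genuinely needed to conclude that the negative eigenvalue survives the restriction to odd functions.
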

We now use the above results to investigate the spectrum of $\mathcal{L}$.
\begin{proposition}\label{prop:spec_curlyL}
	Let $\lambda$ be the single negative eigenvalue of $L$. Then $\mathcal{L}\colon Y\subset X\rightarrow X$ has two simple eigenvalues $\pm\mathrm{i}\sqrt{|\lambda|}$ and the rest of the spectrum is included in $\mathbb{R}$. 
\end{proposition}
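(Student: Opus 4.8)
The plan is to exploit the block structure of $\mathcal{L}$ to reduce every spectral question to the spectral data of $L$ on $L^2_{\text{odd}}(\mathbb{R})$ already established in \Cref{prop:spec_L_odd}. The starting observation is that $\mathcal{L}W=\mu W$ with $W=(W_1,W_2)$ is equivalent to $W_1=\mu W_2$ together with $LW_2=\mu^2 W_2$; thus $\mu$ is an eigenvalue of $\mathcal{L}$ precisely when $\mu^2$ is an eigenvalue of $L$, and the corresponding eigenspaces are in bijection via $W_2\mapsto(\mu W_2,W_2)$. More generally, I would resolve $(\mathcal{L}-\mu I)W=F$ by eliminating $W_1=\mu W_2+F_2$, which reduces the system to $(L-\mu^2)W_2=F_1+\mu F_2$. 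When $\mu^2\in\rho(L)$ this yields the explicit resolvent
\begin{equation*}
	(\mathcal{L}-\mu I)^{-1}=\begin{pmatrix} \mu(L-\mu^2)^{-1} & L(L-\mu^2)^{-1}\\[2pt] (L-\mu^2)^{-1} & \mu(L-\mu^2)^{-1}\end{pmatrix},
\end{equation*}
where in the $(1,2)$ entry I have used $\mu^2(L-\mu^2)^{-1}+I=L(L-\mu^2)^{-1}$.

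The next step is to verify that this formally computed matrix is a bounded operator on $X$. Since $L$ is self-adjoint on $L^2_{\text{odd}}(\mathbb{R})$ with domain $H^4_{\text{odd}}(\mathbb{R})$ and $\mu^2\in\rho(L)$, the resolvent $(L-\mu^2)^{-1}$ is bounded from $L^2_{\text{odd}}(\mathbb{R})$ into $H^4_{\text{odd}}(\mathbb{R})$ by the closed graph theorem together with elliptic regularity, while $L(L-\mu^2)^{-1}=I+\mu^2(L-\mu^2)^{-1}$ is bounded on $L^2_{\text{odd}}(\mathbb{R})$. Tracking the Sobolev exponents entry by entry—using in particular that the second data component lies in $H^2_{\text{odd}}(\mathbb{R})$, so that $(L-\mu^2)^{-1}$ gains four derivatives on it—shows that the displayed matrix maps $X=L^2_{\text{odd}}\times H^2_{\text{odd}}$ boundedly into $Y=D(\mathcal{L})$, and oddness is preserved throughout because $L$ and the scalar $\mu^2$ preserve it. Hence $\mu^2\in\rho(L)$ implies $\mu\in\rho(\mathcal{L})$, equivalently $\sigma(\mathcal{L})\subseteq\{\mu:\mu^2\in\sigma(L)\}$.

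Combining this inclusion with \Cref{prop:spec_L_odd}, which gives $\sigma(L)\subseteq\{\lambda\}\cup(0,\infty)$ with $\lambda<0$ a simple eigenvalue, I would conclude that every point of $\sigma(\mathcal{L})$ is a square root of a point of $\sigma(L)$: the square roots of $\sigma(L)\cap(0,\infty)$ are real, whereas the two square roots of $\lambda$ are exactly $\pm\mathrm{i}\sqrt{|\lambda|}$. This already yields $\sigma(\mathcal{L})\setminus\{\pm\mathrm{i}\sqrt{|\lambda|}\}\subseteq\mathbb{R}$, and the eigenvalue correspondence above shows that $\pm\mathrm{i}\sqrt{|\lambda|}$ are genuine eigenvalues, each of geometric multiplicity one since $\lambda$ is geometrically simple for $L$.

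It remains to upgrade geometric simplicity of $\pm\mathrm{i}\sqrt{|\lambda|}$ to algebraic simplicity, which I expect to be the crux of the argument. Writing $\mu=\mathrm{i}\sqrt{|\lambda|}$ and letting $\psi$ be a real eigenfunction of $L$ at $\lambda$, so that $W=(\mu\psi,\psi)$ spans $\ker(\mathcal{L}-\mu I)$, I would test for a Jordan chain by attempting to solve $(\mathcal{L}-\mu I)V=W$. Eliminating $V_1=\mu V_2+\psi$ reduces this to $(L-\lambda)V_2=2\mu\psi$. Because $L$ is self-adjoint and $\lambda$ is an isolated eigenvalue, this equation is solvable only if the right-hand side is orthogonal to $\ker(L-\lambda)=\mathrm{span}\{\psi\}$; but $\langle 2\mu\psi,\psi\rangle=2\mu\|\psi\|_{L^2}^2\neq0$ since $\mu\neq0$, so no generalized eigenvector exists and the eigenvalue is algebraically simple. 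The identical computation applies to $-\mu$. The main obstacle is thus not any single estimate but the bookkeeping in the two middle steps—confirming that the candidate resolvent genuinely lands in $X$ (indeed $Y$) with the correct Sobolev regularity, and that the Fredholm alternative applies at the isolated eigenvalue $\lambda$—after which the nonvanishing pairing $\langle 2\mu\psi,\psi\rangle$ closes the argument.
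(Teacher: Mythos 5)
Your proposal is correct and follows essentially the same route as the paper: both reduce $(\mathcal{L}-\mu I)W=W^*$ via the block structure to $(L-\mu^2)W_2=W_1^*+\mu W_2^*$, conclude $\mu\in\rho(\mathcal{L})$ iff $\mu^2\in\rho(L)$, and then read off the spectrum of $\mathcal{L}$ from \Cref{prop:spec_L_odd}. The only difference is that you go further than the paper, which merely asserts that $\pm\mathrm{i}\sqrt{|\lambda|}$ are simple; your Jordan-chain computation $(L-\lambda)V_2=2\mu\psi$ together with the nonvanishing pairing $\langle 2\mu\psi,\psi\rangle\neq 0$ supplies the algebraic-simplicity argument that the paper leaves implicit.
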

\begin{proof}
     The resolvent equation $(\mathcal{L}-\mu I)W=W^*$ is equivalent to 
     \begin{equation*}
     	\begin{cases}
     		&(L-\mu^2)W_2=W_1^*+\mu W_2^*,\\
     		&W_1=\mu W_2+W_2^*
     	\end{cases}
     \end{equation*}
     and therefore $\mu\in\rho(\mathcal{L})$ if and only if $\mu^2\in\rho(L)$. We know from \Cref{prop:spec_L_odd} that $\rho(L)=\mathbb{C}\backslash(\{\lambda\}\cup I)$, where $I\subset (0,\infty)$. So $\mu^2\in \rho(L)$ if and only if $\mu\in\mathbb{C}\backslash(\{\pm\mathrm{i}\sqrt{|\lambda|}\}\cup \tilde{I})$, where $\tilde{I}\subset \mathbb{R}$. If we put $W^*=(0,0)$ then we immediately get that $\mu$ is an eigenvalue of $\mathcal{L}$ if and only if $\mu^2$ is an eigenvalue of $L$, which implies that $\mu=\pm\sqrt{|\lambda|}$ and these eigenvalues are simple.
\end{proof}
It follows from \Cref{prop:spec_curlyL} that \cref{hyp:5,hyp:6} of \Cref{thm:lyap} are satisfied. To show that \cref{hyp:7}  is also satisfied, we prove the following result. 

\begin{proposition}
	We have that
	\begin{align}
		\norm{(\mathcal{L}-\mathrm{i}n\sqrt{|\lambda|})^{-1}}_{X\rightarrow X}&\lesssim \frac{1}{\sqrt{|n|}},\label{eq:resolvent_1}\\
		 \norm{(\mathcal{L}-\mathrm{i}n\sqrt{|\lambda|})^{-1}}_{X\rightarrow Y} &\lesssim 1,\label{eq:resolvent_2}
	\end{align}
	for $|n|> 1$.
\end{proposition}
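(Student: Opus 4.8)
The plan is to reduce both bounds to estimates for the scalar resolvent $R_n:=(L+n^2|\lambda|)^{-1}$ and then track powers of $n$ with care. Writing $\mu=\mathrm{i}n\sqrt{|\lambda|}$, so that $\mu^2=-n^2|\lambda|$, the computation in the proof of \Cref{prop:spec_curlyL} shows that solving $(\mathcal{L}-\mu I)W=W^*$ for $W^*=(W_1^*,W_2^*)\in X$ is equivalent to $W_2=R_n(W_1^*+\mu W_2^*)$ and $W_1=\mu W_2+W_2^*$. Expanding and using the identity $\mu^2 R_n W_2^*+W_2^*=R_n L W_2^*$ (valid first on $H^4$ and then extended to $W_2^*\in H^2$), the two components become $W_2=R_n W_1^*+\mu R_n W_2^*$ and $W_1=\mu R_n W_1^*+R_n(LW_2^*)$. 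It thus suffices to bound $\|W_2\|_{H^2},\|W_2\|_{H^4}$ and $\|W_1\|_{L^2},\|W_1\|_{H^2}$ against $\|W^*\|_X=\|W_1^*\|_{L^2}+\|W_2^*\|_{H^2}$, with the sharp $n$-dependence. All operators preserve oddness, since $L$ does and $\mu^2$ is real, so we stay within the odd subspaces throughout.

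The engine is a family of resolvent estimates for $R_n$. Since $L$ is self-adjoint on $L^2_{\text{odd}}(\mathbb{R})$ with spectrum consisting of the negative eigenvalue $\lambda$ and a subset of $(0,\infty)$ by \Cref{prop:spec_L_odd}, the point $-n^2|\lambda|$ lies at distance $(n^2-1)|\lambda|$ from $\sigma(L)$, and the spectral theorem gives $\|R_n\|_{L^2\to L^2}\lesssim n^{-2}$. Testing $(L+n^2|\lambda|)u=f$ against $u$ and using $\langle Lu,u\rangle=\langle M\partial_x u,\partial_x u\rangle\geq 2\kappa\|\partial_x^2 u\|_{L^2}^2-C\|\partial_x u\|_{L^2}^2$ (recall $L=-\partial_x M\partial_x$ and $c-Q>2\kappa$), then absorbing lower-order terms for $|n|$ large and invoking the uniform fourth-order elliptic estimate $\|u\|_{H^4}\lesssim\|Lu\|_{L^2}+\|u\|_{L^2}$, I obtain the base estimate $n^2\|u\|_{L^2}+|n|\,\|u\|_{H^2}+\|u\|_{H^4}\lesssim\|f\|_{L^2}$. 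Differentiating the equation twice and absorbing the resulting order-five commutator $[\partial_x^2,L]u$ (its coefficients are derivatives of $Q$, hence bounded, so the $H^5$-norm it produces is controlled by the $H^6$-norm on the left and by the base estimate) upgrades this to $n^2\|u\|_{H^2}+|n|\,\|u\|_{H^4}\lesssim\|f\|_{H^2}$. Reading these off yields the four facts I need: $\|R_n f\|_{L^2}\lesssim n^{-2}\|f\|_{L^2}$, $\|R_n f\|_{H^2}\lesssim|n|^{-1}\|f\|_{L^2}$, $\|R_n f\|_{H^4}\lesssim\|f\|_{L^2}$, and, crucially, $\|R_n f\|_{H^4}\lesssim|n|^{-1}\|f\|_{H^2}$.

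With these the bookkeeping is routine. For \Cref{eq:resolvent_2} the estimates $\|R_n W_1^*\|_{H^4}\lesssim\|W_1^*\|_{L^2}$ and $\|\mu R_n W_2^*\|_{H^4}\lesssim|n|\cdot|n|^{-1}\|W_2^*\|_{H^2}$ control $\|W_2\|_{H^4}$, while $\|\mu R_n W_1^*\|_{H^2}\lesssim\|W_1^*\|_{L^2}$ and $\|R_n(LW_2^*)\|_{H^2}=\|W_2^*-n^2|\lambda|R_n W_2^*\|_{H^2}\lesssim\|W_2^*\|_{H^2}$ control $\|W_1\|_{H^2}$. For \Cref{eq:resolvent_1} the term $\|W_2\|_{H^2}$ is the bottleneck: interpolating between $\|R_n W_2^*\|_{L^2}\lesssim n^{-2}\|W_2^*\|_{L^2}$ and $\|R_n W_2^*\|_{H^4}\lesssim|n|^{-1}\|W_2^*\|_{H^2}$ gives $\|R_n W_2^*\|_{H^2}\lesssim|n|^{-3/2}\|W_2^*\|_{H^2}$, hence $\|\mu R_n W_2^*\|_{H^2}\lesssim|n|^{-1/2}\|W_2^*\|_{H^2}$, which together with $\|R_n W_1^*\|_{H^2}\lesssim|n|^{-1}\|W_1^*\|_{L^2}$ produces $\|W_2\|_{H^2}\lesssim|n|^{-1/2}\|W^*\|_X$ --- exactly the exponent in \Cref{eq:resolvent_1}.

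The step requiring genuine care is the $L^2$-bound on $W_1$, where the naive triangle inequality is fatal: $\|R_n(LW_2^*)\|_{L^2}=\|W_2^*-n^2|\lambda|R_n W_2^*\|_{L^2}$ is only $O(1)$ if the two terms are estimated separately, because the decisive cancellation between them is lost. Instead I would exploit the structure $L=-\partial_x M\partial_x$ together with the self-adjointness of $R_n$: for $\phi\in L^2_{\text{odd}}(\mathbb{R})$ with $\|\phi\|_{L^2}=1$, one integration by parts gives $\langle R_n(LW_2^*),\phi\rangle=\langle LW_2^*,R_n\phi\rangle=\langle M\partial_x W_2^*,\partial_x R_n\phi\rangle$, so that only the second-order operator $M$ falls on the $H^2$-datum $W_2^*$; hence $|\langle R_n(LW_2^*),\phi\rangle|\lesssim\|W_2^*\|_{H^2}\|R_n\phi\|_{H^2}\lesssim|n|^{-1}\|W_2^*\|_{H^2}$. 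Combined with $\|\mu R_n W_1^*\|_{L^2}\lesssim|n|\cdot n^{-2}\|W_1^*\|_{L^2}=|n|^{-1}\|W_1^*\|_{L^2}$, this gives $\|W_1\|_{L^2}\lesssim|n|^{-1}\|W^*\|_X$, which closes the proof of \Cref{eq:resolvent_1}.
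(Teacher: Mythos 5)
Your proof is correct and follows the same skeleton as the paper's: write out the block inverse $\bigl(\begin{smallmatrix}\mu R_n & LR_n\\ R_n & \mu R_n\end{smallmatrix}\bigr)$ with $R_n=(L-n^2\lambda)^{-1}$, establish the four scalar bounds $\norm{R_n}_{L^2\to L^2}\lesssim n^{-2}$, $\norm{R_n}_{L^2\to H^2}\lesssim |n|^{-1}$, $\norm{R_n}_{L^2\to H^4}\lesssim 1$, $\norm{R_n}_{H^2\to H^4}\lesssim |n|^{-1}$, and then do the bookkeeping entry by entry, with the $(2,2)$ entry in $H^2$ giving the limiting rate $|n|^{-1/2}$ by interpolation. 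The differences are local. First, you derive the scalar bounds by energy estimates and a commutator argument, whereas the paper gets them by interpolating between the self-adjoint bound $\norm{R_n}_{L^2\to L^2}\lesssim n^{-2}$ and the uniform elliptic bound $\norm{R_n}_{H^k\to H^{k+4}}\lesssim 1$; the paper's route is shorter and avoids your (fixable but unaddressed) issue that absorbing the lower-order term $-C\norm{\partial_x u}_{L^2}^2$ only works for $|n|$ large, not for all $|n|>1$. Second, your duality argument for $\norm{R_n LW_2^*}_{L^2}$ is more elaborate than necessary: the cancellation you are worried about losing is already captured by the direct composition $\norm{LR_nW_2^*}_{L^2}\lesssim\norm{R_nW_2^*}_{H^4}\lesssim |n|^{-1}\norm{W_2^*}_{H^2}$, which is what the paper does; the identity $LR_n=I+n^2|\lambda|R_n$ is simply never invoked there. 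Your duality computation does work, but note that $M\partial_x W_2^*$ is third order in $W_2^*$, so you need one further integration by parts (using the divergence form of $M$) before you may legitimately estimate the pairing by $\norm{W_2^*}_{H^2}\norm{R_n\phi}_{H^2}$.
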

\begin{proof}
	In the proof we will make use of the following resolvent estimates for $L$:
	\begin{align}
	\norm{(L-n^2\lambda)^{-1}}_{L^2\rightarrow L^2}&\lesssim \frac{1}{n^2},\label{eq:aux_est_1}	\\
	\norm{(L-n^2\lambda)^{-1}}_{H^k\rightarrow H^{k+4}}&\lesssim 1,\text{ for all }k\geq 0,\label{eq:aux_est_2}
	\end{align}	
	where \cref{eq:aux_est_1} is due to $L$ being self-adjoint and \cref{eq:aux_est_2} follows from standard elliptic estimates, see e.g. \cite[Chapter 7, Proposition 5.5]{Taylor2023}. By interpolating between \cref{eq:aux_est_1} and \cref{eq:aux_est_2} (for $k=0$ and $k=4$ respectively) we obtain
	\begin{align}
		\norm{(L-n^2\lambda)^{-1}}_{L^2\rightarrow H^2}&\lesssim \frac{1}{|n|}\label{eq:aux_est_3}\\
		\norm{(L-n^2\lambda)^{-1}}_{H^2\rightarrow H^4}&\lesssim \frac{1}{|n|}\label{eq:aux_est_4}.
	\end{align}
	Furthemore we will use that
	\begin{equation}\label{eq:aux_est_5}
		\norm{L}_{H^{k+4}\rightarrow H^k}\lesssim 1 \text{ for all }k\geq 0,
	\end{equation}
	and for $s'>s''>s'''\geq 0$ and $\varepsilon>0$ there exists a constant $D\geq 0$ such that
	\begin{equation*}
		\norm{\psi}_{H^{s''}}\leq \varepsilon \norm{\psi}_{H^{s'}}+D\varepsilon^{-\frac{s''-s'''}{s'-s'''}}\norm{\psi}_{H^{s'''}}
	\end{equation*}	
	(see \cite[Theorem 1.4.3.3]{Grisvard2011}). In particular, for $s'=4,\ s''=2,\ s'''=0$,
	\begin{equation}\label{eq:aux_est_6}
		\norm{\psi}_{H^2}\lesssim \varepsilon \norm{\psi}_{H^4}+\varepsilon^{-1}\norm{\psi}_{L^2}.
	\end{equation}
	
	The inverse of $\mathcal{L}-\mathrm{i}n\sqrt{|\lambda|}$ is given by 
	\begin{equation*}
		(\mathcal{L}-\mathrm{i} n\sqrt{|\lambda|})^{-1}=
		\left(\begin{array}{cc}
			\mathrm{i} n\sqrt{|\lambda|}(L-n^2\lambda)^{-1} & L(L-n^2\lambda)^{-1}\\
			(L-n^2\lambda)^{-1} & \mathrm{i}n \sqrt{|\lambda|}(L-n^2\lambda)^{-1}
			\end{array}\right).
	\end{equation*}
	In order to prove \cref{eq:resolvent_1} we consider
	\begin{equation}\label{eq:norm_1}
	\begin{aligned}
		\norm{(\mathcal{L}-\mathrm{i}n\sqrt{|\lambda|})^{-1}W}_X&\leq \norm{\mathrm{i}n\sqrt{|\lambda|}(L-n^2\lambda)^{-1}W_1}_{L^2}+\norm{L(L-n^2\lambda)^{-1}W_2}_{L^2}\\
&\quad+\norm{(L-n^2\lambda)^{-1}W_1}_{H^2}
		 +\norm{\mathrm{i}n\sqrt{|\lambda|}(L-n^2\lambda)^{-1}W_2}_{H^2},
	\end{aligned}
	\end{equation}
	where
	\begin{align*}
		\norm{\mathrm{i}\sqrt{|\lambda|}(L-n^2\lambda)^{-1}W_1}_{L^2}&\underbrace{ \lesssim}_{\eqref{eq:aux_est_1}}\frac{1}{|n|}\norm{W_1}_{L^2},\\
		\norm{L(L-n^2\lambda)^{-1}W_2}_{L^2}&\underbrace{\lesssim}_{\eqref{eq:aux_est_5}}\norm{(L-n^2\lambda)^{-1}W_2}_{H^4}
			\underbrace{\lesssim}_{\eqref{eq:aux_est_4}}\frac{1}{|n|}\norm{W_2}_{H^2},\\
			\norm{(L-n^2\lambda)^{-1}W_1}_{H^2}&\underbrace{\lesssim}_{\eqref{eq:aux_est_3}}\frac{1}{|n|}\norm{W_1}_{L^2},\\
			\norm{\mathrm{i}\sqrt{|\lambda|}(L-n^2\lambda)^{-1}W_2}_{H^2}&\underbrace{\lesssim}_{\eqref{eq:aux_est_6}}|n|\varepsilon\norm{(L-n^2\lambda)^{-1}W_2}_{H^4}+|n|\varepsilon^{-1}\norm{(L-n^2\lambda)^{-1}W_2}_{L^2}\\
			&\lesssim\frac{1}{\sqrt{|n|}}\norm{W_2}_{H^2},
	\end{align*}
	if we chose $\varepsilon=\frac{1}{\sqrt{|n|}}$ in the last inequality. When using these inequalities in \cref{eq:norm_1} we obtain 
	\begin{equation*}
		\norm{(\mathcal{L}-\mathrm{i}n\sqrt{|\lambda|})^{-1}W}_X\lesssim \frac{1}{\sqrt{|n|}}\norm{W}_X,
	\end{equation*}
	for $|n|>1$, which implies \cref{eq:resolvent_1}.
	
	It remains to prove \cref{eq:resolvent_2}. For this inequality we consider
	\begin{equation}\label{eq:norm_2}
	\begin{aligned}
			\norm{(\mathcal{L}-\mathrm{i}n\sqrt{|\lambda|})^{-1}W}_Y&\leq \norm{\mathrm{i}n\sqrt{|\lambda|}(L-n^2\lambda)^{-1}W_1}_{H^2}+\norm{L(L-n^2\lambda)^{-1}W_2}_{H^2}\\
&\quad+\norm{(L-n^2\lambda)^{-1}W_1}_{H^4}
		 +\norm{\mathrm{i}n\sqrt{|\lambda|}(L-n^2\lambda)^{-1}W_2}_{H^4},
	\end{aligned}
	\end{equation}
	where
	\begin{align*}
		\norm{\mathrm{i}n\sqrt{|\lambda|}(L-n^2\lambda)^{-1}W_1}_{H^2}&\underbrace{\lesssim}_{\eqref{eq:aux_est_3}}\norm{W_1}_{L^2},\\
		\norm{L(L-n^2\lambda)^{-1}W_2}_{H^2}&\underbrace{\lesssim}_{\eqref{eq:aux_est_5}}\norm{(L-n^2\lambda)^{-1}W_2}_{H^6}\underbrace{\lesssim}_{\eqref{eq:aux_est_2}}\norm{W_2}_{H^2},\\
		\norm{(L-n^2\lambda)^{-1}W_1}_{H^4}&\underbrace{\lesssim}_{\eqref{eq:aux_est_2}}\norm{W_1}_{L^2},\\
		\norm{\mathrm{i}n\sqrt{|\lambda|}(L-n^2\lambda)^{-1}W_2}_{H^4}&\underbrace{\lesssim}_\eqref{eq:aux_est_4}\norm{W_2}_{H^2}.
	\end{align*}
	When using these inequalities in \cref{eq:norm_2} we obtain
	\begin{equation*}
		\norm{(\mathcal{L}-\mathrm{i}n\sqrt{|\lambda|})^{-1}W}_Y\lesssim \norm{W}_X,
    \end{equation*}	
    which implies \cref{eq:resolvent_2}.
	\end{proof}

This shows that \cref{hyp:7} of \Cref{thm:lyap} is satisfied and therefore we have verified the last three hypotheses of the Theorem. Here we finish the proof of the main theorem by verifying the remaining hypotheses of \Cref{thm:lyap}, that is, the first four. The operator $\mathcal{L}$ is densely defined and since the resolvent set is nonempty according \Cref{prop:spec_curlyL}, it is also closed, so \cref{hyp:1} is satisfied. \Cref{hyp:2} is also satisfied since $N$ is as smooth operator from $H_{\text{odd}}^4(\mathbb{R})\rightarrow L_{\text{odd}}^2(\mathbb{R})$ and clearly $\mathcal{N}(0)=\mathrm{d}\mathcal{N}[0]=0$. It is straightforward to verify that $S\mathcal{L}W=-\mathcal{L}SW$, $S\mathcal{N}(W)=-\mathcal{N}(SW)$, for the bounded operator $SW=(-W_1,W_2)$, hence \cref{hyp:3} is satisfied. Next consider the solvability condition in \cref{hyp:4}. \Cref{eq:solvability} is equivalent with 
\begin{equation*}
	\begin{cases}
		&LW_2=N(W_2^*),\\
		&W_1=0.
	\end{cases}
\end{equation*}
Since $N(W_2^*)=\left(\frac{1}{2}(W_{2xx}^*)^2+W_{2x}^*W_{2xxx}^*-\frac{3}{2}(W_{2x}^*)^2\right)_x\in L_\text{odd}^2(\mathbb{R})$ for all $W_2^*\in H_{\text{odd}}^4(\mathbb{R})$, it follows from \Cref{prop:odd_1} that 
\begin{equation}\label{eq:solvability_formula}
	W_2=L^{-1}(N(W_2^*))\in H_{\text{odd}}^4(\mathbb{R}),
\end{equation}
The right hand side of \cref{eq:solvability_formula} defines a smooth mapping from $H_{\text{odd}}^4(\mathbb{R})$ to itself, so \cref{hyp:4} is satisfied.
\begin{remark}
	We mention here that we do not use the Lyapunov-Iooss theorem in its full generality since $0\in\rho(\mathcal{L})$ in our case, hence \cref{hyp:4} is automatically satisfied in our case as seen in the above calculations.
\end{remark}
\section*{acknowledgements}
This research was carried out while D. Nilsson was supported by the Knut and Alice Wallenberg foundation and while D. Svensson Seth was supported by the Research Council of Norway project no. 325114. Y. Wang acknowledges the support of grant no. 830018 from China.

\printbibliography

\end{document}